\theoremstyle{plain}
\newtheorem{theo}{Theorem}[section]
\newtheorem{coro}[theo]{Corollary}
\newtheorem{prop}[theo]{Proposition}
\newtheorem{lem}[theo]{Lemma}
\newtheorem{exa}[theo]{Example}
\theoremstyle{definition}
\newtheorem{rem}[theo]{Remark}
\newtheorem{df}[theo]{Definition}
\newenvironment{pf}{{\noindent\bf Proof. }}{\hfill $\square$\medskip}
\title[Chern-Moser operators]{Chern-Moser operators and weighted jet determination problems in higher codimension}
\author{L\'ea Blanc-Centi and Francine Meylan}
\def\1#1{\ov{#1}}
\def\2#1{\widetilde{#1}}
\def\5#1{\mathfrak{#1}}
\def\6#1{\mathcal{#1}}
\def\4#1{\mathbb{#1}}
\def\3#1{\widehat{#1}}
\def\C{{\4C}}
\newcounter{remarkcislo}[section]
\newcounter{corcislo}[section]
\newcounter{deficislo}[section]
\newcounter{lemacislo}[section]
\newcounter{propocislo}[section]
\newcounter{exacislo}[section]
\newcounter{theorcislo}[section]
\newcommand{\pz}{P(z,\bar z)}
\newcommand{ \al}{\alpha}
\begin{document}

\maketitle

\begin{abstract} { Counterexamples  to  the  $2-$jet 
determination Chern-Moser  Theorem in  codimension $d>2$ have recently been constructed \cite{me1}.
We 
extend  the Chern-Moser approach for hypersurfaces 
 to real submanifolds of higher codimension in complex space  to derive 
results on jet determination for their automorphism  group. Using these techniques, we show     that   the  $2-$jet 
determination Chern-Moser  Theorem holds  in  codimension $2.$} 

\end{abstract}

\section{Introduction}
The local equivalence problem for real submanifolds in complex spaces is a very natural question, which was started on in complex dimension 2 by H. Poincar\'e and was then explored in the hypersurface case. Two real submanifolds $M$ and $M'$ are said to be locally equivalent at $p\in M$ and $p'\in M'$, respectively, if there exist neighborhoods $V$ of $p$ and $V'$ of $p'$, and a biholomorphic mapping $F:V\to V'$ such that $F(V\cap M)=V'\cap M'$. Local equivalence of submanifolds is obviously a very restrictive condition. Symmetrically, if such a $F$ exists, it is submitted to strong constraints, as is shown in the hypersurface case by the following classical statement due to Chern and Moser \cite{CM}: if $(M,p)$ and $(M',p')$ are smooth Levi nondegenerate real hypersurfaces, then biholomorphic germs of equivalence are uniquely determined by their jet of order 2 at point $p$.

In higher codimension, finite jet determination problems also attracted much attention. We refer in particular  to the contributions \cite{Za,BER1,BMR,eb-la-za,la-mi,ju,ju-la, mi-za} in the real analytic case, \cite{eb,eb-la,ki-za,KMZ2} in the $\mathcal{C}^\infty$ case, \cite{be-bl,be-bl-me,tu3}  in the finitely smooth case.

\medskip

Here we consider a  generic submanifold $M \subset \Bbb C^{n+d}$ of codimension  $d$, of {\it finite type} in the sense of Kohn and Bloom-Graham at a given point  $p$ \cite{BER0,BG,K}; this includes in particular the case of Levi nondegenerate hypersurfaces. 
  We will view $M$  as  a $C^r$-smooth perturbation of the  generic homogeneous submanifold $M_H$ (called the model of $M$, see Section 2 for details) associated to $M$ and given by:
\begin{eqnarray}
\label{fifi0}  M_H= \{ Im\ w= P(z, \bar z , Re \ w )\},\\
\label{corona01}  M= \{ Im\ w= P(z, \bar z , Re \ w)+ \dots \},
\end{eqnarray}  
where $P=(P_1,\dots, P_d )$  with $P_j$ a (non-zero) weighted homogeneous polynomial of degree $m_j$  with no pluriharmonic terms,  and the dots corresponding to $P_j$ being $C^r-$smooth  functions  whose derivatives of  weighted order less or equal to $m_j$ vanish.

Our first goal was to recover the $2$-jet determination theorem stated by Beloshapka in any codimension \cite{Belo88} since the original proof did not work, as we explained in \cite{bl-me1}. Our strategy was to extend the theory  of the generalized  Chern-Moser operator \cite{KMZ1, KMZ2} to smooth generic  submanifolds of higher codimension  of finite type: indeed, the kernel of the generalized Chern-Moser operator  reflects   the link between the weighted grading of  $hol(M_H,p),$  the Lie algebra of the real-analytic  infinitesimal CR automorphisms at $p$ of the model $M_H$  of $M,$  and the weighted jet determination problem for the stability group $Aut(M,p)$.

\medskip

  Using these techniques, we prove 
 that the $2$-jet determination  Chern-Moser Theorem for hypersurfaces still  holds in codimension $2.$  More precisely, we get
  
\begin{theo}\label{corona2} Let  $M \subset \Bbb C^{n+2}$ be a $C^3$-smooth generic  submanifold  of codimension $d=2$ that is  of finite type   $m=(2)$ at $0 \in M$  with a nondegenerate Levi map at $0$. Then any $h =(z+f,w+ g) \in Aut(M,0)$ is  uniquely determined by the following partial derivatives 
 \begin{itemize}
 \item the first  complex tangential derivatives $\dfrac{\partial f_j}{\partial z_k}(0), \ j,k=1, \dots, n,$  
 \item the first and second order normal derivatives $$\dfrac{\partial f_j}{\partial w_l}(0), \ \dfrac{\partial g_u}{\partial w_l}(0), \ \dfrac{\partial^2 g_u}{{\partial {w_s}}{\partial w_l}}(0),   j=1, \dots, n, \ u, l,s=1,2.$$ 
 \end{itemize}
\end{theo}

  Note that codimension 2 is a specific case, since the same techniques actually led the second author to an example of a quadric
  of codimension $5$ in $\C^9$  
  for which  
  $4$-jet determination (and not less) for biholomorphisms  holds \cite{mey}. Later on, Jan Gregorovic and the second author \cite{me1} gave  examples of quadrics with  jet determination  of arbitrarily high order  in codimension $d>2.$\\
Our main tool is given by  the following theorem   
(see  also Theorem \ref{re1} for a more explicit statement)
\begin{theo}\label{re0}{Let $M_H$ be the generic homogeneous submanifold   given by \eqref{fifi0}.  Assume that $M_H$ is holomorphically nondegenerate.
Then 
\begin{enumerate}
\item There exists   $k$ such that  $hol(M_H,0)$ admits the weighted grading
 $$hol(M_H,0) = \oplus_{\mu \le {k}} G_{\mu},\ \  G_{k} \ne \{0 \}.$$
\item For any  sufficiently finitely smooth perturbation of $M_H$  given by  \eqref{corona01},
   there exists a constant  $k_0 \ge 1$ depending only on $M_H$  such that   
 any $h \in Aut (M,0)$  is uniquely determined by  its $k+k_0$ weighted jets at $0.$ 
\end{enumerate}}
\end{theo}
We should mention   that  part (1) in Theorem \ref{re0} is an immediate consequence of  \cite{BER1}. 
  Also, an inspection of the proof of part (2), based on the Taylor expansion of the  holomorphic map $h$, shows that Theorem \ref{re0}    also  holds for formal maps sending $M$ to $M,$ where $M$ is a smooth generic submanifold (or formal submanifold) with  holomorphically nondegenerate  model $M_H$.

 \noindent  Theorem \ref{corona2} then comes from the explicit computation of $k$ for which part (1) holds in Theorem \ref{re0}. A crucial point is to use  "integrations" of a vector field (Lemma \ref{or} and Definition \ref{or1}). This notion was   introduced in \cite{KMZ1} for the hypersurface case and was the key point to get the  counterexample given in \cite{mey}.

\bigskip

 The paper is organized as follows.  In Section 2, we recall the 
notion of Bloom-Graham finite type and its basic properties. We also define  the notions of model generic submanifold  $M_H$ 
associated to $M,$   and of weighted  coordinates associated to $M_H.$  In Section 3, we show  how to 
reduce    the study of  the weighted jet determination  problem for $Aut (M,p),$  the 
stability group of $M,$ to  the study of $ hol (M_H,p),$ the set of real-analytic 
infinitesimal $CR$ automorphisms of $M_H$ at $p$ (see Theorem \ref{re1}). In Section 4, we 
introduce the notion of rigid vector fields  and  prove  results regarding the jet 
determination problem for  $ hol (M_H,p)$ (see Proposition \ref{zu1}). In Section 5, we discuss  the quadric model case  $Q$ and use  
Theorem \ref{re1} to prove   Theorem \ref{corona2} by describing $hol (M_H,p)$ when $d=2.$

\section{Preliminaries}
Usually, the study of a local CR  equivalence problem begins with the choice of appropriate coordinates, merged into bunches according to their geometric contributions, each bunch being assigned a (numerical) weight. For instance, in the case of a Levi nondegenerate hypersurface, the complex normal direction is assigned the weight $2$ while the complex tangential directions are assigned the weight $1$ (see \cite{CM}).
In the case of finite multitype in the sense of Catlin \cite{C} at a given point, the complex normal direction is assigned the weight $1$ while the complex tangential directions are assigned (possibly different) rational weights $\mu_j,$ in order to study the generalized Chern-Moser operator, as done recently \cite{CM,KMZ1,KMZ2}.

\medskip

Let  $M \subseteq \Bbb C^{n+d}$ be a   smooth   generic submanifold of real codimension  $d>1$ 
and $p \in M $ be a  point of {\it finite type} $m=(m_1, \dots,m_k),$  in the
sense of Kohn and Bloom-Graham \cite{BER0,BG,K},
where  $m_1< \dots<m_k$  are  the H\" ormander numbers.
\noindent We  consider 
local holomorphic coordinates $(z,w)$ vanishing at $p$,
where $z =(z_1, z_2, ..., z_n)$ and  $z_j = x_j + iy_j$,
$w \in \Bbb C^d$ and $w =u+iv$. Assuming  that the tangent space to
$M$ at $0$ is given by  $\{ v=0 \},$    $M$  is   described near $0$ as the graph of a uniquely
determined real vector  valued function
\begin{equation} v = \psi(z_1,\dots, z_n,  \bar z_1,\dots,\bar z_n,  u), \ d\psi(0) =0.
\end{equation}

Writing  $w=({{w}}_{m_1}, \dots, {{w}}_{m_k}),$ where ${{w}}_{m_j}$ are  vectors  of length $l_j$ (such that $d=\sum_{j=1}^k l_j m_j),$ we may assume that this $d$-dimensional equation is actually
\begin{equation}\label{fifi1}
M: \begin{cases}
v_{m_1}=\psi_{m_1}(z,\bar z,u)=P_{m_1}(z,\bar z)+\dots\\
v_{m_2}=\psi_{m_2}(z,\bar z,u)=P_{m_2}(z,\bar z, u_{m_1})+\dots\\
\vdots\\
v_{m_k}=\psi_{m_k}(z,\bar z,u)=P_{m_k}(z,\bar z, u_{m_1},\hdots,u_{m_{k-1}})+\dots\\
\end{cases}
\end{equation}
where 
$P_{m_j}(z, \bar z,  {{u}}_{m_1}, \dots, {{u}}_{{m}_{j-1}})$ are real vector valued polynomials of length $l_j$ satisfying the following  conditions of normalization
\begin{itemize}
\item  each one of the $l_j$ components of 
  $P_{m_j}$ is a homogeneous polynomial of degree $m_j$, that is, 
$$P_{m_j}(tz, t\bar z,t^{m_1}u_{m_1}, \dots , t^{m_{j-1}}u_{m_{j-1}})\equiv t^{m_j}P_{m_j}(z,\bar z,u_{m_1}, \dots , u_{m_{j-1}});$$
\item  $P_{m_j}(z, 0,  u_{m_1}, \dots, u_{m_{j-1}})\equiv 0,$ 
\item there are no terms of the form ${u_{m_k}}^{\alpha_k} \dots {u_{m_{j-1}}}^{\alpha_{j-1}}P_{m_k}$ in $P_{m_j}$ for $k<j$ (see condition (6.2.6) of Theorem (6.2) in \cite{BG}),
\end{itemize}
and the dots  terms  are sums of monomials of order strictly bigger than $m_j$ in the formal Taylor expansion of $\psi_{m_j}$.

Coordinates which provide such a description will be called {\it standard coordinates}, and $M$ given by (\ref{fifi1}) is said to be written {\it in standard form}.

\medskip

We assign natural weights to the variables: 
the tangential variables $z_1, \dots, z_n$ are given weight $\frac{1}{m_1}$ while the  component variables  of  $ {w}_{m_j}$ are given weight $\frac{m_j}{m_1}.$ 
\begin{df} {  The weighted degree $\kappa$ of a monomial $$q(z, \bar z, \dots, {u}_{m_1}, \dots, {u}_{m_k})=c_{\alpha \beta \lambda}z^{\al}\bar z^\beta {{u}_{m_1}}^{\lambda_{1}}\dots {{u}_{m_k}}^{\lambda_{k}} $$ is defined as
$$ \kappa:= 
\sum_{j=1}^k|\lambda_{j}|\frac{m_j}{m_1} +  \frac{1}{m_1} \sum_{i=1}^n (\al_i + \beta_i ).$$}\end{df} 
We obtain then the notion of weighted homogeneous polynomial
\begin{df}A polynomial $Q(z, \bar z, u)$  is weighted homogeneous
of weighted degree $\kappa$ if it is a sum of
 monomials of weighted degree $\kappa$.
 \end{df}

\begin{rem}
Note that according to this definition, $P_{m_j}$ is a vector valued weighted homogeneous polynomial of {\it weighted} degree $\frac{m_j}{m_1}$, while the dots terms   are made of {\it weighted} degree bigger than $\frac{m_j}{m_1}.$ 
\end{rem}

\begin{df}  
In this setting, we say that the generic submanifold of codimension $d$ given by 
\begin{equation} \label{fifi2} M_H = \{(z,w) \in \mathbb C^{n+d} \ | \ 
v_{m_j}=P_{m_j}(z, \bar z,  { {u}}_{m_1}, \dots, {{u}}_{m_{j-1}}),   j=1, \dots, k\} \end{equation}
is the model  submanifold of $M$ associated to the standard form (\ref{fifi1}).
\end{df}

Note that standard coordinates are not unique. For instance, in the case of $m=(m_1)$, all models are equivalent by a linear action as it is shown in the next section.

\section{The basic identities}
{In this section, $M$ is assumed to be given by (\ref{fifi1}), with $M_H$ the associated model submanifold. We follow the same approach as in \cite{KMZ1} and \cite{KMZ2}, where the hypersurface case is analyzed.}
\begin{df}{We denote by $Aut (M,0)$  the set of  germs at $0$ of 
biholomorphisms mapping $M$ into itself and fixing $0.$}\end{df}
\begin{lem} Let $h \in Aut(M,0).$ Then $h$  is of  the form
\begin{equation}\begin{aligned}
  {z}' &= z+  f(z,w)\\
   {{{w}}_{m_j}}' &= {{w}}_{m_j} +  g_{m_j}(z,w), \\
\end{aligned} \label{03} \end{equation}
where $g_{m_j}(z,w)$ (resp. $f(z,w)$) is a sum of terms of  weighted degree bigger or equal to $\dfrac{m_j}{m_1} $ (resp. bigger or equal to $\dfrac{1}{m_1}$).
\end{lem}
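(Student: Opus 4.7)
The plan is to expand $h$ in its weighted Taylor series at the origin and extract constraints from the invariance $h(M)\subseteq M$, proceeding block-by-block on the index $j$.

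Since $h\in Aut(M,0)$ satisfies $h(0)=0$, every component of $h$ is a power series with vanishing constant term; in particular, writing $z'=z+f$ automatically gives $f$ of weighted degree $\geq 1/m_1$ (note that $f$ may contain $z$-linear terms, which already carry weight $1/m_1$). The substantive content is the statement for the $\bw_j'$, which I would prove by induction on $j$. Fix $j$ and assume, for every $l<j$, that $\bw_l'$ has weighted degree $\geq m_l/m_1$. The $j$-th invariance condition on $M$, parametrized by $(z,\bar z,u)$ via $\bw_l=\bu_l+i\psi_l(z,\bar z,u)$, reads
\begin{equation*}
\mathrm{Im}\,\bw_j'(z,w)=\psi_j\bigl(z'(z,w),\overline{z'(z,w)},\mathrm{Re}\,\bw_1'(z,w),\ldots,\mathrm{Re}\,\bw_{j-1}'(z,w)\bigr).
\end{equation*}
Since $\psi_j=P_j+(\text{weight}>m_j/m_1)$ with $P_j$ weighted homogeneous of weighted degree $m_j/m_1$, and since every substituted argument has weight at least that of the variable it replaces (the $z'$ by the initial step, the $\mathrm{Re}\,\bw_l'$ by the inductive hypothesis), the right-hand side has weighted degree $\geq m_j/m_1$.

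Suppose for contradiction that $\bw_j'$ contains a non-zero weighted-homogeneous component $G(z,w)$ of minimal weighted degree $\kappa_*<m_j/m_1$. Such a $G$ is a holomorphic polynomial in $z,\bw_1,\ldots,\bw_{j-1}$ only (higher blocks cannot fit in weight $\kappa_*$). Matching the weight-$\kappa_*$ parts on the two sides of the invariance equation---using that the higher-weighted corrections $\tilde\psi_l:=\psi_l-P_l$ contribute only above weight $\kappa_*$, and that the weighted-homogeneous parts of $g_j$ of weight $<\kappa_*$ vanish by minimality---forces the formal identity
\begin{equation*}
\mathrm{Im}\,G\bigl(z,\bu_1+iP_1,\ldots,\bu_{j-1}+iP_{j-1}\bigr)\equiv 0
\end{equation*}
in $(z,\bar z,\bu)$. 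That is, $G$ is a holomorphic polynomial which takes only real values on the generic finite-type CR submanifold $\{\mathrm{Im}\,\bw_l=P_l,\ l\leq j-1\}$.

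The crux of the argument, and the step I expect to be the main obstacle, is the rigidity conclusion forcing $G$ to be constant. The restriction of $G$ to this model is simultaneously CR (as the restriction of a holomorphic function) and anti-CR (as a real-valued CR function), hence is annihilated by all CR and anti-CR vector fields; finite type together with iterated Lie brackets of such fields spans the full tangent space, so the restriction is locally constant, and the identity principle forces $G$ itself to be constant on the ambient space. Since $G$ is weighted-homogeneous of weighted degree $\kappa_*>0$, it must then vanish identically, contradicting minimality. This closes the induction.
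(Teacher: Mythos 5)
Your overall scaffolding (induction on the block index $j$, isolation of a minimal-weight homogeneous component of $g_j$ below the threshold $m_j/m_1$, and comparison of weighted-homogeneous parts of the invariance equation) is the same as the paper's, but your decisive step is genuinely different. The paper, having reduced to a lowest-weight offending term $a_\lambda(z)\bw_1^{\lambda_1}\cdots\bw_k^{\lambda_k}$, kills it by a direct cancellation analysis: after substituting $\bw_l=\bu_l+i\psi_l$ the term produces either $a_\lambda(z)\bu^\lambda$ or $a_\lambda(0)c\,\bu^{\lambda'}P_k$, and the explicit Bloom--Graham normalization conditions (no pluriharmonic terms in $P_j$, no terms $\bu^\lambda P_k$ or components of lower $P$'s occurring inside higher ones) show that nothing else in the equation can match it. You instead package the lowest-weight identity as $\mathrm{Im}\,G\equiv 0$ on the truncated model $\{\mathrm{Im}\,\bw_l=P_l,\ l\le j-1\}$ and invoke CR rigidity: a holomorphic function real-valued on a generic submanifold of finite type is annihilated by the CR fields, their conjugates, and all brackets, hence constant, hence zero by positive homogeneity. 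This is correct and more conceptual; note, though, that it is not independent of the normalization conditions, since you need them (via the Bloom--Graham theorem) to guarantee that the truncated model is itself of finite type, whereas the paper uses them directly in the cancellation. Two small points of care: your rigidity argument should explicitly record that finite type of the truncated model is what licenses the bracket-spanning step; and your weight count for the right-hand side assumes $\psi_j$ depends only on $\mathrm{Re}\,\bw_1',\dots,\mathrm{Re}\,\bw_{j-1}'$, while in the normal form \eqref{fifi1} the higher-order ``dots'' of $\psi_j$ may involve $\mathrm{Re}\,\bw_l$ for $l\ge j$, so substituting $\mathrm{Re}\,\bw_l'=\bu_l+\mathrm{Re}\,g_l$ there is not yet controlled by your block-by-block induction (the paper's write-up is equally silent on this coupling; it is resolved by organizing the induction on the total weight rather than on $j$ alone, or by normalizing away the offending linear terms).
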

\begin{pf}
The statement is obvious for $f(z,w)$ and $g_{m_1}(z,w).$
Indeed, using \eqref{fifi1},  we have 
$$v_{m_1}=\psi_{m_1}(z,\bar z,u)=P_{m_1}(z,\bar z)+\dots.$$ Therefore, we obtain
$$g_{m_1}(z,w)- \overline {g_{m_1}(z,w)}=2i P_{m_1}(f,\bar f)+\dots.$$ Hence, $g_{m_1}(z,w)$ contains  no term of weight less than one.

 Suppose now that the statement is true for $g_{m_k}(z,w), \ k<j,$  and suppose by contradiction that there is in $g_{m_j}(z,w)$ a term of  minimal order $$a_{\alpha}(z) {w_{m_l}}^{\alpha_l}\dots{w_{m_k}}^{\alpha_k}$$ of   weighted degree less than  $\dfrac{m_j}{m_1}.$  If $a_{\alpha}(0) =0,$  this leads to a contradiction since one gets  a term of the form $$a_{\alpha}(z) {u_{m_l}}^{\alpha_l}\dots{u_{m_k}}^{\alpha_k}$$ of   weighted degree less than $\dfrac{m_j}{m_1},$  which is not possible using the conditions of normalization and the induction. If $a_{\alpha}(0) \ne0,$ then either we obtain a term of the form 
$$a_{\alpha}(0) {u_{m_l}}
^{\alpha_l}\dots{u_{m_k}}^{\alpha_k}$$ 
 or a term of the form $$a_{\alpha}(0)c i {u_{m_l}}^{\alpha_l -1}\dots{u_{m_k}}^{\alpha_{k}}P_{m_l},$$ 
where $c$ is a real constant. But this  can not cancel with any other term, using the conditions of normalization of $P_{m_s}, s=1 \dots k.$
 
\end{pf}

 \begin{df}{We denote  by  $hol(M,0)$ the set of  germs of real-analytic 
infinitesimal CR automorphisms of $M$ at $0.$ }\end{df}

\begin{rem}{(\cite{BER0}) Recall that $X \in hol (M_H,0)$ if and only if there exists  a germ $Z$ at $0$ of a  holomorphic vector field in $\Bbb C^{n+d}$ such that $Re Z$ is
 tangent to $M_H$ and $X= Re Z|_{M_H}.$
\noindent By abuse of notation, we also say that $Z \in  hol (M_H,0).$}\end{rem}

\noindent We  decompose the formal Taylor expansion of  $\psi_{m_j},$  denoted by 
 $\Psi_{m_j},$ into   weighted homogeneous polynomials   $\Psi_{m_j,\nu}$ of  weighted degree $\nu,$  
$$\Psi_{m_j} = \sum
\Psi_{m_j, \nu}.$$

\noindent Let $h=( {z_j}',w') 
\in Aut (M,0)$ given by \eqref{03}.

\noindent Putting   $f = (f_1, \dots, f_n),$  and $g=(g_{m_1}, \dots, g_{m_k}),$
we  consider the mapping  given by  
\begin{equation}\label{tutu}T = (f,g),\end{equation}
and, again, decompose each power series $f_j$ and $g_{m_j}$ into  weighted homogeneous polynomials $ f_{j,\mu}$ and  $ g_{m_j,\mu}$ of  weighted degree $\mu,$ 
$$f_j = \sum f_{j,\mu} , \ \ \ \ \ g_{m_j} = \sum g_{m_j,\mu}.$$

 \noindent Since  $h \in Aut (M,0),$  substituting (\ref{03}) into $v'= \psi(z', \bar z', u')$
 we obtain the transformation formula
\begin{equation}\begin{aligned}  \psi(z + f(z,u+i\psi(z, \bar z, u)),
\overline{z + f(z,u+i\psi(z, \bar z, u))}, & u \;+ \\ +\;Re\
 g
 (z,u+i\psi(z, \bar z, u))   = \psi(z, \bar z, u) +
  Im\ g(z, u+&i\psi(z, \bar z,
u)).\label{covf0}\end{aligned} \end{equation}


\noindent  Expanding (\ref{covf0}) we consider 
  terms of weight $\mu > 1$. We get
\begin{equation}
\begin{aligned} 
  2 Re \sum_{j=1}^n \ P_{m_l,z_j}(z,\bar z)  {f_{j, \ \mu-1+\frac{1}{m_1}}}(z,u +&i\pz) + \\
  2 Re \sum_{j=1}^{k-1} \ P_{m_l,w_j}(z,\bar z)  g_{m_j, \ \mu-1+\frac{m_j}{m_1}}(z,u +&i\pz)= \\
 =  Im\ g_{m_l, \ \mu -1   + \frac{m_l}{m_1}}(z,&u+i\pz) + \dots
\end{aligned}
 \label{04}
\end{equation}
where dots denote terms depending on $f_{j, \ \nu-1+\frac{1}{m_1}}, \  g_{m_j, \ \nu-1+ \frac{m_j}{m_1}},
\psi_{\nu},$ for  $\nu < \mu.$ 

\begin{prop}
Let $h=(z +f, w+g) \in Aut(M,0)$ be given by \eqref{03}. Let  
$$(f,g) =\sum_\mu (f,g)_{\mu}$$ where
 $$(f,g)_{\mu}=(f_{\mu-1+\frac{1}{m_1}}, g_{m_1, \ \mu-1+ \frac{m_1}{m_1}},  \hdots , g_{m_k,\  \mu-1+ \frac{m_k}{m_1}} ),$$ and $\mu_0$  be  
minimal such that $(f,g)_{\mu_0} \neq 0.$ 

If $\mu_0 >1,$  the (non trivial) vector field
\begin{equation}\label{fie} Y = \sum_{j=1}^n f_{j, \ \mu_0 -1+\frac{1}{m_1}} 
\frac{\partial}{\partial z_j} + \sum_{j=1}^k g_{m_j, \ \mu_0-1+\frac{m_j}{m_1}} \cdot \frac{\partial}{\partial { w}_{m_j}}
\end{equation}
lies in $hol(M_H,0),$ where $M_H$ is given by \eqref{fifi2}.
\label{th03}   \end{prop}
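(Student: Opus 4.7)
My strategy is to specialize the fundamental identity \eqref{04} to the minimal weight $\mu = \mu_0$, and to recognize what remains as exactly the condition that $Re\, Y$ is tangent to $M_H$.

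\emph{First step: the ``dots'' vanish.} I would apply \eqref{04} with $\mu = \mu_0$. By construction every summand hidden in the ``dots'' involves at least one factor of the form $f_{j,\,\nu-1+\frac{1}{m_1}}$ or $g_{j,\,\nu-1+\frac{m_j}{m_1}}$ with $\nu < \mu_0$. The minimality of $\mu_0$ forces every such piece of $(f,g)$ to vanish identically, so every such product vanishes. Hence for each $l = 1,\dots,k$, \eqref{04} collapses to
\begin{equation*}
2\,Re\sum_{j=1}^n P_{l,z_j}(z,\bar z)\, f_{j,\,\mu_0 -1+\frac{1}{m_1}}(z, u+iP) + 2\,Re\sum_{j=1}^{k-1} P_{l,\bw_j}(z,\bar z)\, g_{j,\,\mu_0 -1+\frac{m_j}{m_1}}(z, u+iP) = Im\, g_{l,\,\mu_0 -1+\frac{m_l}{m_1}}(z, u+iP),
\end{equation*}
understood as a formal identity in $(z, \bar z, u)$.

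\emph{Second step: recognize this as tangency.} The coefficients of $Y$ are weighted homogeneous polynomials in $(z,w)$, holomorphic since they are homogeneous pieces of the holomorphic map $h$. Hence $Y$ is a holomorphic vector field on a neighbourhood of $0$, non-zero by the assumption $(f,g)_{\mu_0} \neq 0$. A direct computation, using that $Y$ is holomorphic and each $P_l$ is real-valued, would give for the defining function $\rho_l := Im\, \bw_l - P_l$ of $M_H$:
\begin{equation*}
Re\, Y(\rho_l) = \tfrac{1}{2}\,Im\, g_{l,\,\mu_0 -1+\frac{m_l}{m_1}} - Re\sum_j f_{j,\,\mu_0 -1+\frac{1}{m_1}}\, P_{l,z_j} - Re\sum_j g_{j,\,\mu_0 -1+\frac{m_j}{m_1}}\, P_{l,\bw_j}.
\end{equation*}
Multiplying by $2$ and restricting to $M_H$ (that is, substituting $w = u + iP$) reproduces exactly the identity of the first step. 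So $Re\, Y(\rho_l)$ vanishes on $M_H$ for every $l$, which means $Re\, Y$ is tangent to $M_H$, i.e.\ $Y \in aut(M_H, 0)$.

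\emph{Main obstacle.} The delicate point---though still routine---is the weight-bookkeeping in the first step: one has to verify that \emph{every} contribution hidden in the ``dots'' of \eqref{04} genuinely involves at least one component $(f,g)_\nu$ with $\nu < \mu_0$. This uses that the leading weighted-homogeneous piece of $\psi_l$ is $P_l$ itself at weight $m_l/m_1$, and that any higher piece $\psi_{l,\nu}$ with $\nu > m_l/m_1$ can contribute at total weight $\mu_0$ only by multiplying components of $(f,g)$ of strictly smaller weight. Tracking this through the Taylor expansion of the transformation formula---including the nested substitution $w = u + i\psi$---is the heart of the argument, and is precisely what makes the identity \eqref{04} the right object to specialize.
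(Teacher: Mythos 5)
Your proposal is correct and follows essentially the same route as the paper: specialize the transformation identity \eqref{04} at the minimal weight $\mu_0$, observe that the remainder terms vanish because they only involve components $(f,g)_\nu$ with $\nu<\mu_0$, and identify the surviving identity with the condition $Re\,Y(\bv_l-P_l)|_{M_H}=0$. Your added remarks on the weight bookkeeping and the explicit tangency computation simply flesh out steps the paper leaves implicit.
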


Here the notation $\dfrac{\partial}{\partial w_{m_j}}$ stands for the $l_j$-dimensional vector of the corresponding partial derivatives, and $g_{m_j}\cdot\frac{\partial}{\partial w_{m_j}}$ for the usual dot product.
 
 \begin{pf}
Using \eqref{04} and the definition of $\mu_0,$  we obtain 
\begin{equation}
\begin{aligned} 
  2 Re \sum_{j=1}^n \ P_{m_l,z_j}(z,\bar z)  {f_{j, \ {\mu_0}-1+\frac{1}{m_1}}}(z,u +&i\pz) + \\
  2 Re \sum_{j=1}^{k-1} \ P_{m_l,w_j}(z,\bar z)  g_{m_j, \ \mu_0-1+\frac{m_j}{m_1}}(z,u +&i\pz)= \\
 =  Im\ g_{m_l, \ \mu_0 -1   + \frac{m_l}{m_1}}(z,&u+i\pz).
\end{aligned}
 \label{05}
\end{equation}
\noindent Applying $Y$ to $v-P$ and using \eqref{05}, we obtain
\begin{equation}\label{lem0}
  Re\; Y(v_{m_l}-P_{m_l})_{|M_H} =\end{equation}
  \begin{equation}
\begin{aligned} 
  - Re \sum_{j=1}^n \ P_{m_l,z_j}(z,\bar z)  {f_{j, \ \mu_0-1+\frac{1}{m_1}}}(z,u +&i\pz) + \\
  - Re \sum_{j=1}^{l-1} \ P_{m_l,w_j}(z,\bar z)  g_{m_j, \ \mu_0-1+\frac{m_j}{m_1}}(z,u +&i\pz)- \\
   +\dfrac{1}{2}Im\ g_{m_l, \ \mu_0 -1   + \frac{m_l}{m_1}}(z,&u+i\pz)=0.
\end{aligned}
 \label{06}
 \end{equation}
 
 \end{pf}

\begin{df}{We say that the vector field $$Y = \sum_{j=1}^n F_j(z,w) \frac{\partial}{\partial z_j} + \sum_{j=1}^kG_{m_j}(z,w)\cdot\frac{\partial}{\partial { w}_{m_j} }$$
 has homogeneous weight $\mu \  (\ge -\frac{m_k}{m_1})$ if $F_j$ is  a weighted homogeneous polynomial of weighted degree $\mu+ \frac{1}{m_1},$ and $G_j$ is a homogeneous polynomial of weighted degree $\mu+\dfrac{m_j}{m_1}.$} 
\end{df}


\begin{rem}{

\noindent We write  
\begin{equation} hol(M_H,0) = \oplus_{\mu \ge -\frac{m_k}{m_1} } G_{\mu},
\end{equation} 
where $G_{\mu}$ consists of weighted homogeneous vector fields of weight $\mu.$ 
 Note  that  each weighted homogeneous component $X_{\mu}$ of $X$
 is in  $ hol(M_H, 0)$ if  $X \in hol(M_H,0).$  }
 \end{rem}

\begin{exa}\label{e1}{
The   vector fields  $\ W_{m_k, j}, j=1, \dots, l_k,$      given by
\begin{equation}\label{gentil.01}
 W_{m_k,j} = \frac{\partial}{\partial  {{w}_{m_k,j}}}
 \end{equation}
lie in $G_{-\frac{m_k}{m_1}}.$}
\end{exa}
\begin{exa}\label{e2}{The  vector field defined  by
\begin{equation}
 E =\dfrac{1}{m_1}  \sum_{j=1}^n z_j \frac{\partial}{\partial z_j } + \sum_{j=1}^k\dfrac{m_j}{m_1}{w}_{m_j}\cdot\frac{\partial}{\partial { w}_{m_j}}.
\end{equation}
 lies in $G_0.$} 
 \end{exa}

\begin{theo}\label{re1}{Let $M \subset \Bbb C^{n+d}$ be a smooth generic  submanifold  of codimension $d$ that is  of finite type  at $0$  given by  \eqref{fifi1}. Let $M_H$ be the  model hypersurface given by \eqref{fifi2}. Let $\mu_0(>\frac{m_k}{m_1})$ such that 
\begin{equation} hol(M_H,0) = \oplus_{-\frac{m_k}{m_1} \le \mu < {\mu_0-\frac{m_k}{m_1}}} G_{\mu}
\end{equation}
Then  any $h=(z +f, w+g) \in Aut(M,0)$  given by \eqref{03} such that  $(f,g)_\mu=0$ for $\mu<\mu_0$ is the identity map. }
\end{theo}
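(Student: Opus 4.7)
The natural strategy is a proof by contradiction that feeds directly into Proposition \ref{th03}. Suppose $h=(z+f,w+g)\in Aut(M,0)$ is given by \eqref{03} with $(f,g)_\mu=0$ for all $\mu<\mu_0$, but $h$ is not the identity. Then $(f,g)\neq 0$, and because the weights appearing in the Taylor expansion of $(f,g)$ belong to the well-ordered set $\frac{1}{m_1}\mathbb{Z}_{\ge 0}$, there is a \emph{smallest} weight $\mu_1\ge\mu_0$ at which $(f,g)_{\mu_1}\ne 0$.

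Applying Proposition \ref{th03} with $\mu_1$ in place of the minimal weight $\mu_0$ there produces a non-trivial vector field
\[
Y = \sum_{j=1}^n f_{j,\mu_1-1+\frac{1}{m_1}}\frac{\partial}{\partial z_j} + \sum_{j=1}^k g_{j,\mu_1-1+\frac{m_j}{m_1}}\frac{\partial}{\partial \bw_j} \in aut(M_H,0),
\]
which, by the definition of weighted homogeneous vector fields, lies in $G_{\mu_1-1}$. The argument then reduces to placing $\mu_1-1$ outside the range indexing the grading of $aut(M_H,0)$: the hypothesis forces $G_\mu=0$ for every $\mu\ge\mu_0-\frac{m_k}{m_1}$, so it is enough to verify $\mu_1-1\ge\mu_0-\frac{m_k}{m_1}$. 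This follows immediately from $\mu_1\ge\mu_0$ together with $m_k\ge m_1$, which give $\mu_1-1\ge\mu_0-1\ge\mu_0-\frac{m_k}{m_1}$. Therefore $Y\in G_{\mu_1-1}=\{0\}$, contradicting the choice of $\mu_1$.

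The real substance of the proof is already concentrated in Proposition \ref{th03}, which shows that the lowest-weight deviation of $h$ from the identity is an honest infinitesimal CR automorphism of the model. Here the only thing that could go wrong is a misalignment of the weight indexing between the definitions of $(f,g)_\mu$ and $G_\mu$; I would therefore double-check that the vector field produced by Proposition \ref{th03} is indeed of weight $\mu_1-1$ (and not $\mu_1$), by matching the weighted degrees $\mu_1-1+\frac{1}{m_1}$ of the $f_j$-components and $\mu_1-1+\frac{m_j}{m_1}$ of the $g_j$-components against the definition of weighted homogeneous vector fields stated just before the theorem.
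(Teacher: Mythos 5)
Your proof is correct and takes essentially the same route as the paper's, whose entire argument is ``apply Proposition \ref{th03}'': you extract the lowest nonvanishing weighted component of $(f,g)$, produce via Proposition \ref{th03} a nontrivial field $Y\in G_{\mu_1-1}\subset aut(M_H,0)$, and contradict the hypothesis that $G_\mu=0$ for $\mu\ge \mu_0-\frac{m_k}{m_1}$. Your explicit weight bookkeeping ($Y\in G_{\mu_1-1}$ and $\mu_1-1\ge\mu_0-1\ge\mu_0-\frac{m_k}{m_1}$ because $m_k\ge m_1$) is precisely the step the paper leaves implicit, and it shows in passing that the paper's additional citation of the Euler field (Example 3.11) is not actually needed for the inequality.
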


\begin{pf} 
Using  Examples \ref{e1} and \ref{e2}, we see that ${-\frac{m_k}{m_1} \le \mu < {\mu_0-\frac{m_k}{m_1}}}, $ with $\mu_0>\frac{m_k}{m_1}.$
Then we apply Proposition \ref{th03}.
 \end{pf}

 {\it Proof of Theorem \eqref{re0}}  An inspection of the proof of 
Proposition \ref{th03} shows that  the conclusion of Theorem \ref{re1} holds if $M$ is  assumed to be of class $C^{m_k+1}.$
\begin{rem}\label{ze1}
If $m=(2),$ the conclusion of Theorem \ref{re1} holds if $M$ is  assumed to be of class $C^{3}.$
\end {rem}

 \section{The components  ${G_{\mu}}$.}
 \begin{df}\label{dob} We denote by ${G_{\mu}}^R$ the set of  vector fields in  ${G_{\mu}}$ that are rigid, that is, whose coefficients depend only on $z.$
\end{df}
\begin{rem}
Note that $ W_{m_k,j}$ are rigid, while $E$ is not.
\end{rem}
We  recall the following definition
\begin{df}
A  real-analytic submanifold $M \subset \Bbb C^N$  is holomorphically nondegenerate at $p \in M$ if there is no germ  at $p$ of a holomorphic vector field $X$ tangent to $M.$
\end{df}
 \begin{prop}\label{zu1}
Let  $M \subset \Bbb C^{n+d}$ be a smooth generic  submanifold  of codimension $d$ that is  of finite type at $0$, written in standard form.   If the associated model $M_H$ is holomorphically non degenerate, then  ${G_{\mu}}^R=\{0\}$ for $\mu \ge {\dfrac{m_k-1}{m_1}}.$
\end{prop}

\begin{pf} Let $X \in {G_{\mu}}^R, \  \mu \ge {\dfrac{m_k-1}{m_1}},$ be given by 
\begin{equation}
X = \sum_{j=1}^n f_j(z) \frac{\partial}{\partial z_j} + \sum_{l=1}^k g_{m_l}(z)\cdot\frac{\partial}{\partial w_{m_l}}.
\end{equation}
We prove that $X$ itself is complex  tangent to $M_H$. First, using \eqref{fifi1} and in particular the fact that no $P_{m_r}$ contains pluriharmonic terms, we obtain that  $g_{m_l}=0.$
Then we get by assumption that for any $r=1,\hdots,k$, 
\begin{equation}
 (Re \sum_{j=1}^n f_j(z) \frac{\partial}{\partial z_j}) \left(v_{m_r} - P_{m_r}(z, \bar z,{u})\right) =0.
\label{5}\end{equation}
By the reality of $P_{m_r}$, we may rewrite (\ref{5})  as 
\begin{equation}
 Re \left (\sum_{j=1}^n f_j \frac{\partial P_{m_r}}{\partial z_j}(z, \bar z,{u})\right ) =0.
\label{6}
\end{equation}
 Write
\begin{equation}
  \sum_{j=1}^n f_j(z) \frac{\partial P_{m_r}}{\partial z_j}(z, \bar z,{u}) = \sum_{\al, \hat \al} {B_{\al \hat \al s,\, r}} z^{\al} \bar z^{\hat \al }{u^s},
\label{7}
\end{equation}
 Using (\ref{6}), we obtain 
\begin{equation}
 {B_{\al \hat \al {s},\, r}}= - \overline{{B_{\hat \al \al {s},\, r}}}.
\label{8}
\end{equation}
On the other hand, since $P_{m_r}$ is of weighted degree $\dfrac{m_r}{m_1}, $
 we have
\begin{equation}
 weight (\frac{\partial P_{m_r}}{\partial z_j}) =\dfrac{m_r}{m_1} -\frac1{m_1}.
\label{9}
\end{equation}
First we claim that ${B_{\al \hat \al {s},\, r }}$ are zero for all $\al, \hat \al, {s}, r$.
By contradiction, assume  
there is $\al , \hat \al,{s}, r, $ with ${B_{\al \hat \al {s},\, r}} \neq 0$. By assumption, 
$ |\al| \geq \dfrac{m_r}{m_1}$ whereas  $ |\hat \al | < \dfrac{m_r}{m_1}$, using (\ref{9}).
On the other hand, by (\ref{8}), we obtain that there exists a nonzero term with weight 
in $z$ less than $\dfrac{m_r}{m_1}$, and in $\bar z $ greater than or equal $\dfrac{m_r}{m_1}.$ 
That gives  a contradiction, hence all ${B_{\al \hat \al {s},\, r}}$ are zero.
Therefore we obtain that $X$ is complex  tangent to $M_H$, and since $M_H$ is holomorphically
nondegenerate,  $X=0.$
\end{pf}

   Let   $h=(z+f,w+g) \in Aut(M,0)$ be given by \eqref{03}. If    $M_H$ is holomorphically non degenerate, then there exist $N_1, N_2, N_3$ such that $g$ is uniquely determined by the following  set of derivatives \cite{BMR}
 \begin{equation}\{
 \dfrac{\partial^{|\alpha|} f}{\partial {z}^{\alpha}},\ \dfrac{\partial^{|{\beta}| + |{\gamma|}} g}{\partial {z}^{{\beta}}\partial {w}^{{\gamma}}},\ |\alpha|\le N_1,\ |{\beta}| \le N_{2},\ 0<|\gamma |\le N_{3}\}. 
 \end{equation}
 Proposition \ref{zu1} yields 
\begin{coro}
Let  $M \subset \Bbb C^{n+d}$ be a smooth generic  submanifold  of codimension $d$ that is  of finite type at $0$ written in standard form, such that the associated model $M_H$ is holomorphically non degenerate.  Let   $h=(z+f,w+g) \in Aut(M,0),$  $N_1, N_2, N_3$ as above.
  Then $N_1\le m_k-1.$ 
\end{coro}
\begin{rem}
Note that  if $m=(m_1),$ $$\{ W_{m_1,j}, j=1, \dots, d \} = {G_{-1}}^R= {G_{-1}} $$
\end{rem}
We have the following lemma whose easy proof is left to the reader.
\begin{lem}\label{or}{Let  $M \subset \Bbb C^{n+d}$ be a smooth generic  submanifold  of codimension $d$ that is  of finite type at $0$ with $m=(m_1)$, written in standard form. Let $Y \in G_{\mu}\setminus {G_{\mu}}^R $  and let $\{W_{m_1,j}, \ j=1, \dots, d\}$ be given by \eqref{gentil.01}. For every   $1 \le j \le d,$  there exist an integer  $k_j\ge 0$  and a   vector field denoted by ${\mathcal {D}}^{(k_j)}(Y) \in hol(M_H,0), \ \ ({\mathcal {D}}^{(0)}(Y)=Y),$ whose coefficients do not depend on $w_{m_1,j}$  such that $[ \dots [[Y;W_{m_1,j}];W_{m_1,j}];\dots ];W_{m_1,j}]={\mathcal {D}}^{(k_j)}(Y),$ where the string of brackets is of length  $k_j.$} 
\label{lem1}\end{lem}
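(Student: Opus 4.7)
The plan is to compute $[Y,W_{1,j}]$ explicitly and iterate. The key observation is that $W_{1,j}=\partial/\partial \bw_{1,j}$ has constant coefficients, so bracketing with $W_{1,j}$ amounts to differentiating the coefficients of $Y$ with respect to $\bw_{1,j}$, while leaving the coordinate frame $\{\partial/\partial z_i,\partial/\partial \bw_{1,l}\}$ untouched. Since every coefficient of $Y$ is a weighted homogeneous polynomial in $(z,w)$, it has finite degree in the single variable $\bw_{1,j}$, so this dependence must be eliminated after finitely many iterations.

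Concretely, first I would write
\begin{equation*}
Y = \sum_{i=1}^n F_i(z,w)\,\frac{\partial}{\partial z_i} + \sum_{l=1}^d G_l(z,w)\,\frac{\partial}{\partial \bw_{1,l}},
\end{equation*}
with $F_i$ weighted homogeneous of weighted degree $\mu + 1/m_1$ and $G_l$ of weighted degree $\mu + 1$ (since $m=(m_1)$, the variable $w$ reduces to $\bw_1=(\bw_{1,1},\dots,\bw_{1,d})$). A direct computation, using that $W_{1,j}$ has constant coefficients, then gives
\begin{equation*}
[Y,W_{1,j}] = -\sum_{i=1}^n \frac{\partial F_i}{\partial \bw_{1,j}}\,\frac{\partial}{\partial z_i} - \sum_{l=1}^d \frac{\partial G_l}{\partial \bw_{1,j}}\,\frac{\partial}{\partial \bw_{1,l}},
\end{equation*}
and a straightforward induction shows that the $k$-fold iterated bracket has as coefficients $(-1)^k$ times the $k$-th partial derivative of the coefficients of $Y$ with respect to $\bw_{1,j}$. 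Choosing $k_j$ to be the maximal degree in $\bw_{1,j}$ occurring among $F_1,\dots,F_n,G_1,\dots,G_d$ (and $k_j=0$ when $Y$ has no $\bw_{1,j}$-dependence), the resulting vector field $\mathcal{D}^{(k_j)}(Y)$ has coefficients that are polynomials independent of $\bw_{1,j}$.

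Finally, $\mathcal{D}^{(k_j)}(Y)\in aut(M_H,0)$ follows from closure of $aut(M_H,0)$ under the Lie bracket: since $Y,W_{1,j}\in aut(M_H,0)$, so do all of their iterated brackets. The anticipated main obstacle is essentially absent -- this is a bookkeeping lemma whose entire content is the identity that $Y\mapsto [Y,W_{1,j}]$ acts on coefficients as $-\partial/\partial \bw_{1,j}$, combined with the standard fact that $aut(M_H,0)$ is a Lie subalgebra of the real-analytic vector fields tangent to $M_H$.
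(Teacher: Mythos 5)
Your proof is correct, and since the paper explicitly leaves this "easy proof to the reader," your argument is precisely the intended one: bracketing with the constant-coefficient field $W_{1,j}$ differentiates the (polynomial, hence finite-degree) coefficients of $Y$ in $\bw_{1,j}$, and closure of $aut(M_H,0)$ under the Lie bracket keeps every iterate in the algebra. No gaps.
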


This leads to the following definition in the case $m=(m_1).$
\begin{df}\label{or1}{ Let  $X \in {G_{\mu}}^R$, and $\kappa=(\kappa_1,\hdots,\kappa_d)\in\mathbb{N}^d$. We say that $Y \in {G_{\mu+\sum_{j=1}^d k_j}}$  is a  $\kappa$-integration of  $X$  if 
\begin{equation}{\mathcal {D}}^{(\kappa_1)}(\dots({\mathcal {D}}^{(\kappa_d)}(Y)\dots)= X.
\end{equation}
 We denote $Y$ by ${\mathcal{D}}^{-(\kappa)}(X).$}
\end{df}

By abuse of notation, we will also refer to ${\mathcal{D}}^{-k}(X)$ where $k=|\kappa|=\sum_{i=1}^d\kappa_i$, since the resulting integrated vector fields will be treated similarly.

 \section{The components  ${G_{\mu}}$ for the quadric.}

We wish to discuss  Theorem \ref{re1} in  the case of a smooth generic  submanifold  of codimension $d$ that is  of finite type $m=(2)$. Once written in standard form, we get that the model submanifold is a quadric $\mathcal{Q}$, that is,
\begin{equation} \label{boston} \mathcal{Q}=
\begin{cases}
v_1={\ }^t\bar{z} A_1 z,\\
v_2={\ }^t\bar{z} A_2 z,\\
\dots,\\
v_j={\ }^t\bar{z} A_j z,\\
\dots,\\
v_d={\ }^t\bar{z} A_d z\\
\end{cases}
\end{equation} 
with  $A_j, j=1,\dots, d$  being  linearly independent Hermitian matrices.
 
\begin{rem}\label{vinc}
In coordinates, $A=(A_1,\hdots,A_d)$ corresponds to the Levi map of $M$ at 0. The linear independence of the $A_i$ is actually equivalent to being of finite type $m=(2)$ at 0 for $M \subset \Bbb C^{n+d}$ a smooth generic  submanifold  of codimension $d$.
\end{rem}

The properties of the standard form also give the following lemma.
\begin{lem} \label{mah2}
Let  $M \subset \Bbb C^{n+d}$ be a smooth generic  submanifold  of codimension $d$ that is  of finite type at $0$, written in standard form with its model quadric given by   \eqref{boston}.   Then
\begin{equation}\label{mah1}\cap \ker{(A_j)} =\{0\} \end{equation} if and only if there is no holomorphic tangent vector field to $\mathcal{Q}$.
\end{lem}
A direct application of Proposition \ref{zu1} yields 
\begin{coro}\label{zu}
If the conditions in Lemma \ref{mah2} are satisfied, then  ${G_{\mu}}^R=\{0\}$ for $\mu >0.$
\end{coro}

\begin{rem} Note that
 ${G_{0}}^R\ \ne \{0\},$ since  the vector field 
\begin{equation}\label{stres}
\sum_{j=1}^n iz_j \dfrac {\partial}{\partial z_j} \in {G_{0}}^R.
\end{equation}
\end{rem}
\begin{rem}
We also get that ${\mathcal{D}}^{-1}({G_0}^R)\not=\{0\}$ can only happen if the codimension $d$ is bigger than 2, since for $d=1$ Chern Moser's Theorem \cite{CM} shows that  the mixed derivatives  $\dfrac{\partial^2 f_j}{{\partial {w}}{\partial {z_k}}}, \ j,k=1, \dots, n,$ are  not needed. 
And it actually happens, as in the following example.
\end{rem}

\begin{exa}\label{exa0} Let $M$ be given by
{\begin{equation*}
M=\{ (z_1, z_2, z_3, z_4, w_1, w_2, w_3)) \in \Bbb C^7 \ | \ \end{equation*} \begin{equation*}v_1=z_3 \bar {z_3}, \  v_2=z_4 \bar {z_4},\ v_3=z_1 \bar {z_3} + z_3 \bar {z_1} +z_2 \bar {z_4}+z_4 \bar{ z_2}.\}
\end{equation*}

The associated matrices are
$$ A_1=
\begin{pmatrix}
0&0&0&0\\
\,\,0&0&0&0\\
\,\,0&0&1&0\\
\,\,0&0&0&0\
\end{pmatrix}
 A_2=
\begin{pmatrix}
0&0&0&0\\
\,\,0&0&0&0\\
\,\,0&0&0&0\\
\,\,0&0&0&1\
\end{pmatrix}$$$$
 A_3=
\begin{pmatrix}
0&0&1&0\\
\,\,0&0&0&1\\
\,\,1&0&0&0\\
\,\,0&1&0&0\
\end{pmatrix}$$}

Let $X$ and $Y$ be  the vector fields 
$$
X=iz_3\frac{\partial}{\partial z_1}, \ Y=-iz_4\frac{\partial}{\partial z_2}$$

It is easy to check that
$$
w_1Y +w_2 X \in {\mathcal{D}}^{-1}({G_0}^R)
$$
\end{exa}
\begin{theo}\label{bre1}
Let  $M \subset \Bbb C^{n+2}$ be a smooth generic  submanifold  of codimension $d=2$ that is  of finite type   $m=(2)$ at $0$, written in standard form. Assume the associated model quadric $\mathcal{Q}$ is holomorphically non degenerate, then 
\begin{itemize}
\item $(i)$ ${\mathcal{D}}^{-1}({G_0}^R)\setminus ({\mathcal{D}}^{-1}({G_0}^R) \cap{\mathcal{D}}^{-2}({G_{-1}}^R))  = \{0\}$
\item $(ii)$ ${\mathcal{D}}^{-2}({G_{-\frac{1}{2}}}^R)  = \{0\}$
\item $(iii)$ ${\mathcal{D}}^{-1}({G_{-1}}^R) \ne 0, \ \ {\mathcal{D}}^{-3}({G_{-1}}^R)  = \{0\}$
\end{itemize}
\end{theo}

Before proving Theorem \ref{bre1}, let us explain how it leads to theorem \ref{corona2}, providing a generalization of the $2$-jet determination Chern-Moser Theorem in the case of codimension $2.$

According to Lemma \ref{or}, we need to study  the  $\kappa$-integrations of any rigid vector field.  Theorem \ref{bre1} provides  the precise  $\kappa$-integrations needed, and  shows that at most  $2$-integrations are needed, depending on  the rigid vector field. Using  Theorem \ref{re1} and Remark \ref{ze1}, we  then conclude that any $h =(z+f,w+ g) \in Aut(M,0)$ is  uniquely determined by the following partial derivatives 
\begin{itemize}
 \item the first  complex tangential derivatives $\dfrac{\partial f_j}{\partial z_k}(0), \ j,k=1, \dots, n,$    corresponding to $(i)$ in Theorem \ref{bre1},
 \item the first and second order normal derivatives $$\dfrac{\partial f_j}{\partial w_l}(0), \ \dfrac{\partial g_u}{\partial w_l}(0), \ \dfrac{\partial^2 g_u}{{\partial {w_s}}{\partial w_l}}(0),   j=1, \dots, n, \ u, l,s=1,2$$   corresponding  to $(ii)$ and $(iii)$ in Theorem \ref{bre1}.
 \end{itemize}  
 
\begin{proof} In this special case of codimension $d=2$ with $m=(2)$, we refer to (\ref{boston}) by setting $Q=(Q_1,Q_2)$ where $Q_i={\,}^t\bar{z}A_iz$ ($i=1,2$), instead of $P_{m_1},P_{m_2}$.
\begin{itemize}
\item {\it Proof of $(i)$ -} Suppose by contradiction that there exist $X, Y \in {G_{0}^R}$  such that
\begin{equation}\label{dem}
w_1X +w_2 Y \in {\mathcal{D}}^{-1}({G_0}^R).
\end{equation}
Without loss of generality, we may assume that $[X,Y]=0,$ since otherwise, we have
$w_2 [X,Y] \in {\mathcal{D}}^{-1}({G_0}^R),
$ which is not possible unless $[X,Y]=0.$
Then, by assumption, we obtain the following equation
\begin{equation}
\label{dem1}Q_1X(Q) + Q_2 Y(Q)=0.
\end {equation}
Using the fact that $M$ is of finite type $m=(2),$  \eqref{dem1} leads to \begin{equation}
X(Q)=\alpha Q_2, \ Y(Q)=\beta Q_1,\end{equation}  $ \alpha$ and  $\beta$ complex valued vectors.

Using the fact that $ [X,Y]=0,$ and the fact that the model quadric $\mathcal{Q}$ is holomorphically nondegenerate, we obtain that $X=Y=0.$

\item {\it Proof of $(ii)$ -} 
Let $Z_1 \in  {G_{-\frac{1}{2}}}^R$ be of the form
\begin{equation}
Z_1=a_1 \dfrac{\partial}{\partial z_1} +a_2 \dfrac{\partial}{\partial z_2}+b_1(z) \dfrac{\partial}{\partial w_1}+b_2(z) \dfrac{\partial}{\partial w_2}, 
\end{equation}
with $ a_i \in \Bbb C,$  and $ b_i(z)$ linear.
If  $Z_1$  integrates,  we obtain, after a possible permutation of the variables $w_1$ and $w_2,$ an equation of the form
 \begin{equation}\label{stresa2}
  Q_1 Im \  Z_1 (Q) +   Q_2 Im \  Z_2 (Q) +\ Re X_1 (Q)=0,
 \end{equation}
 where  $Z_2 \in {G_{-\frac{1}{2}}}^R$ and $X_1$ is a vector field of weight $\frac{1}{2}.$
If $ {\mathcal{D}}^{-2}(Z_1)$ exists, then we obtain the following system
 \begin{equation}\label{stresa1}
\begin{cases}
  Q_1Im \ X_1(Q) + Q_2Im \ X_2(Q)  
 =0\\
 \\
  Q_1Im \ X_2(Q) + Q_2Im \ X_3(Q) 
 =0,\\
 \end{cases}
 \end{equation}
 where   $X_j$ are vector fields of weight $\frac{1}{2}.$
 It is not hard to see that since $Q_1$ and $Q_2$ are linearly independent Hermitian forms, the only solution to the system \eqref{stresa1} is  the trivial solution. Hence, using  Corollary \ref{zu} and  \eqref{stresa2},  the following system of equations holds
 \begin{equation}\label{stresa3}
\begin{cases}
  Q_1 \ Z_1(Q) + Q_2 \ Z_2(Q)  
 =0\\
 \\
  Q_1 \ Z_2(Q) + Q_2 \ Z_3(Q) 
 =0,\\
 \end{cases}
 \end{equation}
 where  $Z_3 \in {G_{-\frac{1}{2}}}^R.$
 Using \eqref{stresa3}, we conclude that $Z_1(Q)=0,$  and hence $Z_1=0,$ which gives the contradiction.
 
\item {\it Proof of $(iii)$ -} By integrating $W_1= \dfrac{\partial}{\partial w_1},$ we obtain an equation of the form 
\begin{equation}\label{ex3}
 AQ + Re X (Q) =0,
\end{equation}
where $A$ is a nonzero $2\times2$ real matrix, and 
 $X$ is a  vector field of weight $0.$
 Using the Euler field, we conclude  that \eqref{ex3} holds (with $A=-I$). Hence ${\mathcal{D}}^{-1}({G_{-1}}^R) \ne 0.$
 If ${\mathcal{D}}^{-3}({G_{-1}}^R) \ne 0,$ we obtain  a nontrivial equation of the form
 \begin{equation}\label{alex}             
 \sum c_{\alpha_1\alpha_2}{Q_1}^{\alpha_1}{Q_2}^{\alpha_2} =0.
 \end{equation}
 But \eqref{alex} is not possible by assumption of finiteness. Indeed, \eqref{alex} would  imply  $Q_1 = \alpha Q_2,$  which contradicts that  $M$ is of finite type  $m=(2).$ 
 \end{itemize}
 
\end{proof}

\bigskip

\end{document}